\documentclass[12pt,reqno]{amsart}
\usepackage{amssymb}
\usepackage{enumerate}
\usepackage[T1]{fontenc}
\usepackage[all]{xy}
\usepackage[usenames]{color}

\setlength{\hoffset}{-5mm}
\setlength{\textwidth}{165mm}
\setlength{\oddsidemargin}{5mm}
\setlength{\evensidemargin}{0mm}
\setlength{\marginparsep}{3mm}
\setlength{\marginparwidth}{20mm}

\textheight=246truemm
\parskip=4truept
\headheight=7pt
\voffset-15mm
\pagestyle{myheadings}

\newcommand{\bmid}{\,\big|\,}    
\newcommand{\linkset}{\mathfrak{Link}}

\let\Gamma=\varGamma

\newlength{\short}
\setlength{\short}{\textwidth}
\addtolength{\short}{-20mm}


\let\too=\longrightarrow
\let\xto=\xrightarrow

\newcommand{\higherlim}[3]{H^{#1}(#2;#3)}

\let\oldcirc=\circ
\renewcommand{\circ}{\mathchoice
    {\mathbin{\scriptstyle\oldcirc}}{\mathbin{\scriptstyle\oldcirc}}
    {\mathbin{\scriptscriptstyle\oldcirc}}
    {\mathbin{\scriptscriptstyle\oldcirc}}}

\SelectTips{cm}{10} \UseTips   


\mathcode`\:="003A
\mathchardef\cdot="0201

\setcounter{tocdepth}{2}

\renewenvironment{enumerate}[1][]
{\begin{enumerat}[#1]\setlength{\itemsep}{6pt}}{\end{enumerat}}

\newenvironment{enuma}{\begin{enumerate}[{\rm(a) }]}{\end{enumerate}}
\newenvironment{enumi}{\begin{enumerate}[{\rm(i) }]}{\end{enumerate}}

\def\beq#1\eeq{\begin{equation*}#1\end{equation*}}
\def\beqq#1\eeqq{\begin{equation}#1\end{equation}}

\let\emptyset=\varnothing

\DeclareMathAlphabet\EuR{U}{eur}{m}{n}
\SetMathAlphabet\EuR{bold}{U}{eur}{b}{n}
\newcommand{\curs}{\EuR}
\newcommand{\Ab}{\curs{Ab}}
\renewcommand{\mod}{\textup{-}\curs{mod}}

\renewcommand{\:}{\colon}

\newlength{\upto}\newlength{\dnto}
\setlength{\upto}{9pt} \setlength{\dnto}{0pt}

\newcounter{let} \setcounter{let}{0}
\loop\stepcounter{let}
\expandafter\edef\csname cal\alph{let}\endcsname%
{\noexpand\mathcal{\Alph{let}}}
\ifnum\thelet<26\repeat

\newcommand{\tdef}[2][]{\expandafter\newcommand\csname#2\endcsname%
{#1\textup{#2}}}
\tdef{Iso}   \tdef{Aut}    \tdef{Out}    \tdef{Inn}    \tdef{Hom}
\tdef{End}   \tdef{Inj}    \tdef{map}    \tdef{Ker}    \tdef{Ob}
\tdef{Mor}   \tdef{Res}    \tdef{Spin}   \tdef{Id}     \tdef{Fr}
\tdef{rk}    \tdef{conj}   \tdef{incl}   \tdef{proj}   \tdef{diag} 
\tdef{trf}   \tdef{Sol}    \tdef{He}     \tdef{Sz}     \tdef{cj}
\tdef{free}  \tdef{ev}     \tdef{Map}    \tdef{Rep}    \tdef{pr}
\tdef{supp}  \tdef{res}    \tdef{hofibre}  \tdef{Vect}  \tdef{hofib}
\tdef{Coker} \tdef{St}
\tdef[_]{fc}  \tdef[_]{fn}  \tdef[_]{typ} \tdef[^]{op}   \tdef[^]{ab}

\newcommand{\SL}{\textit{SL}}

\newcommand{\fdef}[1]{\expandafter\newcommand\csname#1\endcsname%
{\mathfrak{#1}}}
\fdef{X} \fdef{Y} \fdef{N} \fdef{red}  \fdef{foc}  \fdef{hyp}  \fdef{Sub}

\newcommand{\bbdef}[1]{\expandafter\newcommand%
\csname#1\endcsname{\mathbb{#1}}}
\bbdef{C} \bbdef{F} \bbdef{R} \bbdef{Z} \bbdef{Q}

\newcommand{\gen}[1]{\langle{#1}\rangle}

\let\nsg=\normal

\newcommand{\syl}[2]{\textup{Syl}_{#1}(#2)}
\newcommand{\sylp}[1]{\syl{p}{#1}}

\newcommand{\outf}{\Out_{\calf}}

\newcommand{\homf}{\Hom_{\calf}}

\newcommand{\sminus}{\smallsetminus}
\newcommand{\defeq}{\overset{\textup{def}}{=}}

\newtheorem{Thm}{Theorem}[section]
\newtheorem{Prop}[Thm]{Proposition}

\newtheorem{Lem}[Thm]{Lemma}

\newtheorem{Defi}[Thm]{Definition}

\newcommand{\longleft}[1]{\;{\leftarrow%
\count255=0 \loop \mathrel{\mkern-6mu}%
    \relbar\advance\count255 by1\ifnum\count255<#1\repeat}\;}
\newcommand{\longright}[1]{\;{\count255=0 \loop \relbar\mathrel{\mkern-6mu}%
    \advance\count255 by1\ifnum\count255<#1\repeat\rightarrow}\;}
\newcommand{\Right}[2]{\overset{#2}{\longright{#1}}}
\newcommand{\RIGHT}[3]{\mathrel{\mathop{\kern0pt\longright{#1}}
    \limits^{#2}_{#3}}}

\newcommand{\LEFT}[3]{\mathrel{\mathop{\kern0pt\longleft{#1}}\limits^{#2}_{#3}}
}
\newcommand{\longleftright}[1]{\;{\leftarrow\mathrel{\mkern-6mu}%
    \count255=0\loop\relbar\mathrel{\mkern-6mu}%
    \advance\count255 by1\ifnum\count255<#1\repeat\rightarrow}\;}

\newcommand{\onto}[1]{\;{\count255=0 \loop \relbar\joinrel
    \advance\count255 by1
    \ifnum\count255<#1 \repeat \twoheadrightarrow}\;}

\newcommand{\RLEFT}[3]{\mathrel{%
   \mathop{\vcenter{\baselineskip=0pt\hbox{$\kern0pt\longright{#1}$}%
   \hbox{$\kern0pt\longleft{#1}$}}}\limits^{#2}_{#3}}}

\theoremstyle{definition}

\newtheorem{Ex}[Thm]{Example}

\usepackage[usenames]{color}

\usepackage[usenames]{color}                                      %
      %
                               %
                           %
                        %
               %

\newcommand{\xxto}[1]{\mathrel{\mathop{%
  \setbox0\hbox{$\ {\scriptstyle#1}\ $}%
  \hbox to \wd0{\rightarrowfill}}^{#1}}%
}

\newcommand{\xlto}[2][]{%
  \mathrel{\mathop{%
    \setbox0\vbox{
      \hbox{$\scriptstyle\;\;{#1}\;\;$}%
      \hbox{$\scriptstyle\;\;{#2}\;\;$}%
    }%
    \hbox to\wd0{\leftarrowfill}\displaystyle}%
  \limits^{#2}\ifx{#1}{}\else{_{#1}}\fi}%
}




\author{B. Oliver}
\address{Université Paris 13, Sorbonne Paris Cité, LAGA, UMR 7539 du CNRS, 
99, Av. J.-B. Clément, 93430 Villetaneuse, France.}
\email{bob@math.univ-paris13.fr}
\thanks{B. Oliver is partially supported by UMR 7539 of the CNRS}

\subjclass{Primary 20D20. Secondary 20D45, 20E45}

\keywords{Finite groups, Sylow subgroups, fusion.}

\pagestyle{headings}

\title{A remark on the construction of centric linking systems}

\begin{document}

\begin{abstract} 
We give examples to show that it is \emph{not} in general possible to prove 
the existence and uniqueness of centric linking systems associated to a 
given fusion system inductively by adding one conjugacy class at a time to 
the categories. This helps to explain why it was so difficult to prove that 
these categories always exist, and also helps to motivate the procedure 
used by Chermak \cite{Chermak} when he did prove it.
\end{abstract}

\maketitle

When $S$ is a finite $p$-group, a \emph{saturated fusion system} $\calf$ 
over $S$ is a category $\calf$ whose objects are the subgroups of $S$, 
whose morphisms are injective homomorphisms between the subgroups, and 
which satisfies certain axioms originally due to Puig \cite[\S\,2.9]{Puig1} (who 
calls it a Frobenius $S$-category). Equivalent sets of axioms can be 
found, for example, in \cite[Definition 1.2]{BLO2} and \cite[Definition 
I.2.2]{AKO}. We omit the details of those axioms here, except to note that 
if $\varphi$ is a morphism in $\calf$, then all restrictions of $\varphi$ 
(obtained by restricting the domain and/or the target) are also in $\calf$, 
and $\varphi^{-1}$ is in $\calf$ if $\varphi$ is an isomorphism of groups. 
The motivating example is the fusion system $\calf_S(G)$, when $G$ is a 
finite group and $S\in\sylp{G}$. In this case, for $P,Q\le S$, 
	\[ \Mor_{\calf_S(G)}(P,Q) = 
	\bigl\{\varphi\in\Hom(P,Q) \,\big|\, \varphi=c_g = 
	(x\mapsto gxg^{-1}), \textup{ some $g\in G$} \bigr\}. \]

A \emph{centric linking system} associated to a saturated fusion system 
$\calf$ over $S$ is a category $\call$ whose objects are the 
$\calf$-centric subgroups of $S$ (Definition \ref{d:F-centric}), together 
with a functor $\pi\:\call\too\calf$, which satisfy certain conditions 
listed in Definition \ref{d:link}. One of the central questions in the 
field has been that of whether each saturated fusion system does admit an 
associated centric linking system, and if so, whether it is unique up to 
isomorphism. One obvious way to try to construct a centric linking system 
associated to $\calf$ is to do it one $\calf$-conjugacy class (i.e., 
isomorphism class in $\calf$) at a time. One begins with a ``linking 
system'' having as unique object $S$ itself (this is not difficult). One 
then extends the category to also include an isomorphism class of maximal 
$\calf$-centric subgroups of $S$, and continues adding objects until all 
$\calf$-centric subgroups have been included. In this way, the difficulties 
in the construction are split up, and one need only work with one small 
``piece'' of the categories at a time. 

The main result of this paper is to present some simple examples that show 
that this procedure is not possible in general. We construct examples of 
fusion systems $\calf$, and sets $\Y\subseteq\X$ of $\calf$-centric 
subgroups of $S$ which are closed under $\calf$-conjugacy and overgroups 
(and differ by exactly one $\calf$-conjugacy class), such that there is 
more than one isomorphism class of linking systems associated to $\calf$ 
with object set $\Y$, only one of which can be extended to a linking system 
with object set $\X$. A general framework for doing this is given in 
Theorem \ref{Lambda3}, and explicit examples satisfying the hypotheses of 
the theorem are found in Examples \ref{ex1}, \ref{ex2}, and \ref{ex3}.

These examples help to explain why a general construction of centric 
linking systems associated to arbitrary saturated fusion systems was so 
difficult: one cannot expect to find an inductive construction based on 
adding one isomorphism class at a time to the set of objects. They also 
show that the claims by Puig in \cite{Puig} (in the introduction and the 
beginning of \S\,6) that under the above assumptions on $\Y\subseteq\X$, 
there is up to isomorphism a unique $\Y$-linking system associated to 
$\calf$ and it always extends to an $\X$-linking system, are not true. 
(What we call here an ``$\X$-linking system associated to $\calf$'' is 
called a ``perfect $\calf^\X$-locality'' in \cite{Puig}.)

The existence and uniqueness of centric linking systems was shown by 
Chermak \cite{Chermak,O-Ch} in 2011. His proof used the classification of 
finite simple groups, but more recent work by Glauberman and Lynd 
\cite{GLynd} has shown that this dependence can be removed. Chermak's 
construction was also based on an inductive procedure, but he avoided the 
difficulty raised by the examples constructed here by adding (in general) 
several $\calf$-conjugacy classes at a time, and doing so following a very 
precise algorithm. This is just one of several remarkable features of his 
construction.

\noindent\textbf{Notation:} We write $C_X(G)$ for the centralizer of an 
action of $G$ on a set or group $X$; i.e., the elements fixed by $G$. Also, 
$c_a$ denotes \emph{left} conjugation by $a$: $c_a(x)=\9ax=axa^{-1}$. When $\calc$ 
is a small category and $F\:\calc\op\too\Ab$ is a functor to abelian 
groups, $\higherlim{i}{\calc}F$ denotes the $i$-th higher derived functor 
of the inverse limit of $F$.
Whenever $F\:\calc\too\cald$ is a functor and $c,c'\in\Ob(\calc)$, we let 
$F_{c,c'}$ denote the induced map of sets from $\Mor_\calc(c,c')$ to 
$\Mor_\cald(F(c),F(c'))$, and set $F_c=F_{c,c}$ for short.

\bigskip

\section{Higher limits over orbit categories}

Recall that when $\calf$ is a saturated fusion system over $S$, two 
subgroups of $S$ are said to be \emph{$\calf$-conjugate} if they are 
isomorphic in the category $\calf$. For example, for a finite group $G$ and 
$S\in\sylp{G}$, two subgroups are $\calf_S(G)$-conjugate if and only if 
they are $G$-conjugate in the usual sense. 

\begin{Defi} \label{d:F-centric} 
Let $\calf$ be a saturated fusion system over a finite $p$-group $S$. 
\begin{enuma} 
\item A subgroup $P\le S$ is \emph{$\calf$-centric} if for each $Q$ that 
is $\calf$-conjugate to $P$, $C_S(Q)\le Q$. 

\item Let $\calf^c$ denote the set of $\calf$-centric subgroups of $S$, and 
also (by abuse of notation) the full subcategory of $\calf$ with object set 
$\calf^c$. 

\item For each set $\X$ of subgroups of $S$, let $\calf^\X\subseteq\calf$ 
be the full subcategory with $\Ob(\calf^\X)=\X$. 

\end{enuma}
\end{Defi}

In general, we write $\homf(P,Q)$ for the set of $\calf$-morphisms from $P$ 
to $Q$. 

When $S$ is a $p$-group (in fact, any group), we let $\calt(S)$ denote the 
\emph{transporter category} of $S$: the category whose objects are the 
subgroup of $S$, and where 
	\[ \Mor_{\calt(S)}(P,Q) = T_S(P,Q) \defeq 
	\bigl\{g\in S\,\big|\, \9gP\le Q \bigr\}. \]
For any set $\X$ of subgroups of $S$, $\calt^\X(S)$ denotes the full 
subcategory of $\calt(S)$ with set of objects $\X$.

\begin{Defi}  \label{d:link}
Let $\calf$ be a fusion system over the $p$-group $S$, and let 
$\X\subseteq\calf^c$ be a nonempty family of subgroups closed under 
$\calf$-conjugacy and overgroups. An \emph{$\X$-linking system} associated 
to $\calf$ is a category $\call^\X$ with $\Ob(\call^\X)=\X$, together with 
functors $\pi\:\call^\X\too\calf^\X$ and $\delta\:\calt^\X(S)\too\call^\X$ 
that satisfy the following conditions.
\begin{enumerate}[\rm(A) ]
\renewcommand{\labelenumi}{\textup{(\Alph{enumi})}}%
\item Both $\delta$ and $\pi$ are the identity on objects, and $\pi$ is 
surjective on morphisms. For each $P,Q\in\X$, $Z(P)$ acts freely on 
$\Mor_{\call^\X}(P,Q)$ by composition (upon identifying $Z(P)$ with 
$\delta_P(Z(P))\le\Aut_{\call^\X}(P)$), and $\pi$ induces a bijection
	\[ \Mor_{\call^\X}(P,Q)/Z(P) \Right5{\cong} \homf(P,Q). \]

\item  For each $P,Q\in\X$ and each $g\in T_S(P,Q)$, $\pi$ sends 
$\delta_{P,Q}(g)\in\Mor_{\call^\X}(P,Q)$ to $c_g\in\Hom_{\calf}(P,Q)$.

\item  For each $P,Q\in\X$, $f\in\Mor_{\call^\X}(P,Q)$, and $g\in{}P$, 
	\[ f\circ \delta_P(g) = \delta_Q(\pi(f)(g))\circ f \in 
	\Mor_{\call^\X}(P,Q). \]

\end{enumerate}
Two $\X$-linking systems $\call_1^\X$ and $\call_2^\X$ associated to $\calf$ with 
structural functors $\pi_i\:\call_i^\X\too\calf^\X$ and 
$\delta_i\:\calt^\X(S)\too\call_i^\X$, are \emph{isomorphic} if there 
is an isomorphism $\Psi\:\call_1^\X\Right2{\cong}\call_2^\X$ of cate\-gories 
that commutes with the $\pi_i$ and the $\delta_i$.
\end{Defi}

Note that the set $\calf^c$ is closed under $\calf$-conjugacy and 
overgroups: the first holds by definition, and the second is easily 
checked. An $\calf^c$-linking system is exactly the same as a centric 
linking system as defined in \cite[\S\,III.4.1]{AKO}.

Aside from differences in requirements for the set of objects, this is the 
definition of a linking system given in \cite[Definition III.4.1]{AKO}), 
and is equivalent to the definition of a perfect locality in 
\cite[\S\S\,2.7--2.8]{Puig}. It is slightly different from the definition 
in \cite[Definition 1.7]{BLO2}, which for the purposes of comparison we 
call here a ``weak $\X$-linking system''.

\begin{Defi}  \label{d:weaklink}
Let $\calf$ be a fusion system over the $p$-group $S$, and let 
$\X\subseteq\calf^c$ be a nonempty family of subgroups closed under 
$\calf$-conjugacy and overgroups. A \emph{weak $\X$-linking system} 
associated to $\calf$ is a category $\call^\X$ with $\Ob(\call^\X)=\X$, 
together with a functor $\pi\:\call^\X\too\calf^\X$, and monomorphisms 
$\delta_P\:P\Right2{}\Aut_{\call^\X}(P)$ for each $P\in\X$, such that (A) 
and (C) in Definition \ref{d:link} both hold and (B) holds when $P=Q$ and 
$g\in P$. Two weak $\X$-linking systems $\call_1^\X$ and $\call_2^\X$ 
associated to $\calf$, with structural functors 
$\pi_i\:\call_i^\X\too\calf^\X$ and monomorphisms 
$(\delta_i)_P\:P\too\Aut_{\call_i^\X}(P)$ for $P\in\X$, are 
\emph{isomorphic} if there is an isomorphism of categories 
$\Psi\:\call_1^\X\Right2{\cong}\call_2^\X$ that commutes with the $\pi_i$ 
and with the $(\delta_i)_P$.
\end{Defi}

Most of the time, we just write ``$\call^\X$ is a (weak) $\X$-linking 
system'', and the functors $\pi$ and $\delta$ (or functions $\delta_P$) are 
understood. When we need to be more explicit, we write 
``$(\call^\X,\pi,\delta)$ is an $\X$-linking system'', or 
``$(\call^\X,\pi,\{\delta_P\})$ is a weak $\X$-linking system'' to include 
the structural functors (or functions) in the notation.

For $\calf$ and $\X$ as above, an $\X$-linking system 
$(\call^\X,\pi,\delta)$ restricts in an obvious way to a weak $\X$-linking 
system $(\call^\X,\pi,\{(\delta_0)_P\})$: just let 
$(\delta_0)_P\:P\too\Aut_{\call_0^\X}(P)$ be the restriction of 
$\delta_P\:\Aut_{\calt^\X(S)}(P)=N_S(P)\too \Aut_{\call^\X}(P)$ for each 
$P\in\X$. Note that for each $P\in\X$, 
$\Ker((\delta_0)_P)\le\Ker(\pi_P\circ(\delta_0)_P)=Z(P)$ by (B), so 
$(\delta_0)_P$ is a monomorphism by (A) ($Z(P)$ acts freely on 
$\Aut_{\call^X}(P)$). 
We also say that the $\X$-linking system $(\call^\X,\pi,\delta)$
extends $(\call^\X,\pi,\{(\delta_0)_P\})$ in this situation.

\begin{Prop} \label{uniq.ext.}
Let $\calf$ be a fusion system over the $p$-group $S$, and let 
$\X\subseteq\calf^c$ be a nonempty family of subgroups closed under 
$\calf$-conjugacy and overgroups. Then each weak $\X$-linking system 
$(\call^\X,\pi,\{(\delta_0)_P\})$ extends to an $\X$-linking system 
$(\call^\X,\pi,\delta)$, and any two such extensions are isomorphic as 
linking systems.
\end{Prop}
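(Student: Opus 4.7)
\medskip
\textbf{Plan.} Since $\pi$ and the underlying category $\call^\X$ are fixed, constructing an extending functor $\delta\colon\calt^\X(S)\too\call^\X$ amounts to coherently choosing lifts $\delta_{P,Q}(g)\in\Mor_{\call^\X}(P,Q)$ of each $c_g\in\homf(P,Q)$, $g\in T_S(P,Q)$. In $\calt^\X(S)$ every such $g$ factors as the isomorphism $g\colon P\to{}^gP$ followed by the inclusion ${}^gP\le Q$, and both ${}^gP$ and $Q$ lie in $\X$ by the $\calf$-conjugacy and overgroup closure hypotheses. So it is enough to provide (a) group homomorphisms $\delta_P\colon N_S(P)\to\Aut_{\call^\X}(P)$ extending each $(\delta_0)_P$, and (b) distinguished lifts $\iota_{P,Q}\in\Mor_{\call^\X}(P,Q)$ of the inclusions $P\le Q$, in such a way that $\iota_{Q,R}\circ\iota_{P,Q}=\iota_{P,R}$ for chains $P\le Q\le R$ and $\iota_{P,Q}\circ\delta_P(g)=\delta_Q(g)\circ\iota_{P,Q}$ for $g\in N_S(P)\cap N_S(Q)$.

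\medskip
\textbf{Existence.} I would proceed by downward induction on $|P|$. The base case $P=S$ forces $\delta_S=(\delta_0)_S$ since $N_S(S)=S$, and there are no proper overgroups. For $P<S$ in $\X$, $\calf$-centricity and overgroup closure yield some $R\in\X$ with $P<R$ (for instance $R=N_S(P)$), for which $\delta_R$ and the $\iota_{Q,R}$ have already been constructed. Pick any lift $\iota_{P,R}\in\Mor_{\call^\X}(P,R)$ of the inclusion; by (A) there are exactly $|Z(P)|$ choices. The decisive technical step is to show that $\iota_{P,R}$ is left-cancellable, i.e.\ $\iota_{P,R}\circ\varphi_1=\iota_{P,R}\circ\varphi_2$ implies $\varphi_1=\varphi_2$ in $\Aut_{\call^\X}(P)$. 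Granting this, I would define $\delta_P(g)$ for $g\in N_S(P)$ as the unique lift of $c_g$ satisfying $\iota_{P,R}\circ\delta_P(g)=\delta_R(g)\circ\iota_{P,R}$, and $\iota_{P,Q}$ for $P\le Q\le R$ as the unique lift satisfying $\iota_{Q,R}\circ\iota_{P,Q}=\iota_{P,R}$. Cancellability then forces: $\delta_P$ to be a homomorphism (since $\delta_R$ is); $\delta_P$ to agree with $(\delta_0)_P$ on $P$ (by axiom (C) applied at $R$ to $(\delta_0)_R$); and all remaining compatibilities to hold by testing after left composition with an appropriate $\iota_{Q,R}$. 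Independence from the choice of $R$ is checked by comparing at a common overgroup.

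\medskip
\textbf{Uniqueness.} Let $\delta,\delta'$ be two extensions. By (A) there is a unique $z(g)\in Z(P)$ for each $g\in T_S(P,Q)$ with $\delta'_{P,Q}(g)=\delta_{P,Q}(g)\circ(\delta_0)_P(z(g))$; functoriality of $\delta$ and $\delta'$ combined with axiom (C) force $z$ to be a $1$-cocycle on $\calt^\X(S)$ valued in the functor $P\mapsto Z(P)$. An isomorphism of linking systems $\Psi\colon\call^\X\to\call^\X$ with $\pi\Psi=\pi$ and $\Psi\delta=\delta'$ corresponds to a $0$-cochain $\{y_P\in Z(P)\}$ cobounding $z$, with $\Psi_P$ the conjugation by $(\delta_0)_P(y_P)$. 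Since both $\delta,\delta'$ restrict to $(\delta_0)_P$ on $P\le N_S(P)$, the cocycle $z$ vanishes on each $P$, and one can solve for $\{y_P\}$ by the same downward induction used in the existence argument.

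\medskip
\textbf{Main obstacle.} The heart of the argument is the left-cancellability of the chosen inclusion lifts $\iota_{P,R}$: this is a form of the divisibility axiom that is automatic in a (full) linking system, but here it must be extracted from the weaker hypotheses of Definition~\ref{d:weaklink}. The expected proof uses $\calf$-centricity of $P$ (so $Z(P)=C_S(P)$) together with axiom (C) applied at $R$ to identify the discrepancy between competing lifts with an element of $Z(P)$ which is then forced to be trivial by the free $Z(R)$-action provided by (A). Once cancellability is in hand, both existence and uniqueness reduce to routine inductive bookkeeping over the poset of $\X$-subgroups of $S$.
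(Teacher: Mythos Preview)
Your strategy is sound and can be made to work, but it is substantially more elaborate than the paper's. The simplification you are missing is that $S\in\X$ (since $\X$ is nonempty and closed under overgroups), so one may take $R=S$ uniformly for every $P$. The paper simply chooses, once and for all, lifts $\iota_P\in\Mor_{\call^\X}(P,S)$ of $\incl_P^S$ with $\iota_S=\Id_S$, and then defines $\delta_{P,Q}(g)$ for all $P,Q\in\X$ and $g\in T_S(P,Q)$ in one stroke as the unique morphism making the square
\[
\xymatrix@C=40pt{
P \ar[r]^{\iota_P} \ar[d]_{\delta_{P,Q}(g)} & S \ar[d]^{(\delta_0)_S(g)} \\
Q \ar[r]^{\iota_Q} & S
}
\]
commute. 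There is no downward induction, no variable intermediate overgroup $R$, no independence-of-$R$ verification, and no need to construct a full compatible system of $\iota_{P,Q}$ for general pairs $P\le Q$. Your ``main obstacle'', left-cancellability of inclusion lifts, is precisely the uniqueness half of the factorization property the paper isolates as \eqref{e:blo1.10} (citing \cite[Lemma~1.10(a)]{BLO2}); this follows from axiom (A) alone, via the free $Z(P)$-action on $\Mor_{\call^\X}(P,R)$, and does not require the route through centricity and axiom (C) that you sketch.

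For uniqueness the paper again works directly at the top: given a second extension $\delta'$, it sets $\iota'_P=\delta'_{P,S}(1)$, reads off $z_P\in Z(P)$ with $\iota'_P=\iota_P\circ(\delta_0)_P(z_P)$ from (A), and checks by a single diagram chase that $\Theta(\psi)=\delta_Q(z_Q)\circ\psi\circ\delta_P(z_P)^{-1}$ is an isomorphism carrying $\delta'$ to $\delta$. This is exactly your $0$-cochain $\{y_P\}$, exhibited explicitly rather than solved for inductively; your cocycle framing is conceptually clean but not needed here.
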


\begin{proof} The following property of (weak) linking systems is used 
repeatedly in the proof.
	\beqq \parbox{\short}{Let $P,Q,R\in\X$, 
	$\psi\in\Mor_{\call^\X}(P,R)$, $\psi_2\in\Mor_{\call^\X}(Q,R)$, and 
	$\varphi_1\in\homf(P,Q)$ be such that 
	$\pi_{Q,R}(\psi_2)\circ\varphi_1=\pi_{P,R}(\psi)$. Then there is a 
	unique morphism $\psi_1\in\Mor_{\call^\X}(P,Q)$ such that 
	$\pi_{P,Q}(\psi_1)=\varphi_1$ and $\psi_2\circ\psi_1=\psi$.} 
	\label{e:blo1.10} \eeqq
This is an easy consequence of condition (A), and is shown in \cite[Lemma 
1.10(a)]{BLO2}.

The existence of an $\X$-linking system $(\call^\X,\pi,\delta)$ that 
extends $(\call^\X,\pi,\{(\delta_0)_P\})$ is shown in \cite[Lemma 
1.11]{BLO2} (at least, when $\X=\calf^c$). We recall the construction here. 
For each $P\in\X$, choose an ``inclusion morphism'' 
$\iota_P\in\Mor_{\call^\X}(P,S)$ such that $\pi_{P,S}(\iota_P)=\incl_P^S$ (the 
inclusion of $P$ in $S$), and such that $\iota_S=\Id_S$. For each $P,Q\in\X$ 
and each $g\in T_S(P,Q)$, there is a unique element 
$\delta_{P,Q}(g)\in\Mor_{\call^\X}(P,Q)$ such that the following square 
commutes in $\call^\X$:
	\[ \xymatrix@C=50pt@R=40pt{
	P \ar[r]^{\iota_P} \ar[d]_{\delta_{P,Q}(g)} & S 
	\ar[d]_{\delta_S(g)} \\
	Q \ar[r]^{\iota_Q} & S \rlap{\,.}
	} \]
This is immediate by \eqref{e:blo1.10}, applied with $P,Q,S$ in the role 
of $P,Q,R$ and $c_g\in\homf(P,Q)$ in the role of $\varphi_1$. From the 
uniqueness in \eqref{e:blo1.10}, we also see that these morphisms combine 
to define a functor $\delta\:\calt^\X(S)\too\call^\X$. By condition (C) 
(and the uniqueness in \eqref{e:blo1.10}), $(\delta_0)_P(g)=\delta_P(g)$ 
for each $P\in\X$ and each $g\in P$. Thus $(\call^\X,\pi,\delta)$ is an 
$\X$-linking system that extends $(\call_0^\X,\pi_0,\{(\delta_0)_P\})$. 
Note also that $\iota_P=\delta_{P,S}(1)$ for each $P\in\X$.

Now let $\delta'$ be another functor such that $(\call^\X,\pi,\delta')$ is 
an $\X$-linking system that extends $(\call_0^\X,\pi_0,\{(\delta_0)_P\})$. For 
each $P\in\X$, set $\iota'_P=\delta'_{P,S}(1)$. Then 
$\pi_{P,S}(\iota'_P)=\incl_P^S=\pi_{P,S}(\iota_P)$ by condition (B), so 
by (A), there is $z_P\in Z(P)$ such that 
$\iota'_P=\iota_P\circ(\delta_0)_P(z_P)=\iota_P\circ\delta_P(z_P)$. For 
each $P,Q\in\X$ and $\psi\in\Mor_{\call^\X}(P,Q)$, consider the following 
diagram:
	\[ \xymatrix@C=50pt@R=15pt{
	P \ar[r]^{\delta_P(z_P)} \ar[drr]_(0.65){\iota'_P} 
	\ar[dd]_{\delta'_{P,Q}(g)} 
	& P \ar[dr]^{\iota_P} \ar[dd]_{\delta_{P,Q}(g)} \\
	&& S \ar[dd]^{\delta_S(g)=\delta'_S(g)} \\
	Q \ar[r]^{\delta_Q(z_Q)} \ar[drr]_{\iota'_Q} & Q \ar[dr]^{\iota_Q} \\
	&& S \rlap{\,.}
	} \]
Here, $\delta_S(g)=\delta'_S(g)$ since both are equal to 
$(\delta_0)_S(g)$ by assumption. The two parallelograms commute since 
$\delta$ and $\delta'$ are functors, and the two triangles commute by 
choice of $z_P$ and $z_Q$. Hence the square on the left 
commutes by the uniqueness in \eqref{e:blo1.10}. So if we define a functor 
$\Theta\:\call^\X\too\call^\X$ by setting $\Theta(P)=P$ for $P\in\X$ and 
$\Theta(\psi)=\delta_Q(z_Q)\circ\psi\circ\delta_P(z_P)^{-1}$ for 
$\psi\in\Mor_{\call^\X}(P,Q)$, then $\Theta\circ\delta'=\delta$ and 
$\pi\circ\Theta=\pi$. Thus $\Theta$ is an isomorphism from 
$(\call^\X,\pi,\delta')$ to $(\call^\X,\pi,\delta)$.
\end{proof}

Since isomorphic linking systems clearly restrict to isomorphic weak 
linking systems, Proposition \ref{uniq.ext.} shows that for $\calf$ and 
$\X$ as above, there is a natural bijection between the set of isomorphism 
classes of $\X$-linking systems associated to $\calf$ and the set of 
isomorphism classes of weak $\X$-linking systems associated to $\calf$. In 
particular, the obstruction theory set up in \cite[\S\,3]{BLO2} for the 
existence and uniqueness of weak linking systems also applies to that for 
linking systems in the sense of Definition \ref{d:link} (see Proposition 
\ref{obstr}).

We next define orbit categories, since they play an important role here. In 
fact, we need to consider two different types of orbit categories: those 
for fusion systems and those for groups. 

\begin{Defi} \label{d:orbitcat}
\begin{enuma} 

\item Let $\calf$ be a saturated fusion system over a finite $p$-group $S$. 
The \emph{orbit category} $\calo(\calf)$ of $\calf$ is the category with 
the same objects (the subgroups of $S$), and such that for each $P,Q\le S$,
	\[ \Mor_{\calo(\calf)}(P,Q) = Q{\backslash}\homf(P,Q). \]
Here, $g\in Q$ acts on $\homf(P,Q)$ by post-composition with 
$c_g\in\Inn(Q)$. Thus a morphism in $\calo(\calf)$ is a conjugacy class of 
morphisms in $\calf$. Also,
let $\calo(\calf^c)\subseteq\calo(\calf)$ be the full subcategory with 
object set $\calf^c$.

\item Let $G$ be a finite group, and fix $S\in\sylp{G}$. Let $\calo_S(G)$ 
be the category where $\Ob(\calo_S(G))$ is the set of subgroups of $S$, and 
where
	\[ \Mor_{\calo_S(G)}(P,Q) = \textup{map}_G(G/P,G/Q)\,: \]
the set of $G$-equivariant maps from the transitive $G$-set $G/P$ to the 
$G$-set $G/Q$. Note that each $\varphi\:G/P\too G/Q$ has the form 
$\varphi(gP)=gaQ$ (for all $g\in G$) for some fixed $a\in G$ such that 
$P\le\9aQ$.
\end{enuma}
\end{Defi}

Note that for a finite group $G$ and $S\in\sylp{G}$, there is a natural 
surjective functor
	\[ \calo_S(G) \Right4{} \calo(\calf_S(G)) \,: \]
this is the identity on objects, and sends a morphism $(gP\mapsto gaQ)$ 
(from $G/P$ to $G/Q$) to the class of $c_a^{-1}\in\Hom_{\calf_S(G)}(P,Q)$.

We refer to \cite[\S\,III.5.1]{AKO} for a very brief summary of some basic 
properties of ``higher limits'': higher derived functors of inverse limits. 
We also refer to \cite[\S\S\,5--6]{JMO} for more details about higher 
limits over orbit categories of groups, to \cite[\S\,3]{BLO2} for those 
over orbit categories of fusion systems, and to \cite[\S\,III.5.4]{AKO} for 
both.

When $\calf$ is a saturated fusion system over a finite $p$-group $S$, and 
$\X\subseteq\calf^c$ is closed under $\calf$-conjugacy and overgroups, 
define 
	\[ \calz_\calf^\X \: \calo(\calf^c)\op \Right4{} \Ab 
	\qquad\textup{by setting}\qquad
	\calz_\calf^\X(P) = \begin{cases} 
	Z(P)=C_S(P) & \textup{if $P\in\X$} \\
	0 & \textup{if $P\notin\X$.}
	\end{cases} \]
When $P,Q\in\X$, $\calz_\calf^\X$ sends a morphism $(P\xto{[\varphi]}Q)$ to 
$\bigl(Z(P)\Right2{\varphi^{-1}}Z(Q)\bigr)$ (where 
$[\varphi]\in\Mor(\calo(\calf^c))$ is the class of 
$\varphi\in\Mor(\calf^c)$). If 
$\Y\subseteq\X\subseteq\calf^c$ are both closed under $\calf$-conjugacy and 
overgroups, it is not hard to see that $\calz_\calf^\Y$ is a quotient 
functor of $\calz_\calf^\X$.

\begin{Prop} \label{obstr}
Let $\calf$ be a saturated fusion system over a finite $p$-group $S$. Let 
$\X\subseteq\calf^c$ be a nonempty family of subgroups that is closed under 
$\calf$-conjugacy and overgroups, and let $\linkset_\calf^\X$ be the set of 
all isomorphism classes of $\X$-linking systems associated to $\calf$.
\begin{enuma} 

\item The set $\linkset_\calf^\X$ is nonempty if and only if a certain 
obstruction in $\higherlim3{\calo(\calf^c)}{\calz_\calf^\X}$ is zero. In 
particular, $\linkset_\calf^\X\ne\emptyset$ whenever 
$\higherlim3{\calo(\calf^c)}{\calz_\calf^\X}=0$.

\item If $\linkset_\calf^\X\ne\emptyset$, then the group 
$\higherlim2{\calo(\calf^c)}{\calz_\calf^\X}$ acts freely and transitively 
on $\linkset_\calf^\X$, and hence has the same cardinality as 
$\linkset_\calf^\X$.

\end{enuma}
\end{Prop}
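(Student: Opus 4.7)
The plan is to reduce to the obstruction theory of \cite[\S\,3]{BLO2}, using Proposition \ref{uniq.ext.} to pass between $\X$-linking systems and weak $\X$-linking systems. That proposition shows that restriction induces a bijection between $\linkset_\calf^\X$ and the set of isomorphism classes of weak $\X$-linking systems associated to $\calf$, so it suffices to prove (a) and (b) for the weak version. The obstruction theory in \cite[\S\,3]{BLO2} is written explicitly only for $\X = \calf^c$, and the first task is to check that the same arguments apply verbatim for an arbitrary $\X$ closed under $\calf$-conjugacy and overgroups.

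Recalling the construction: given the category $\calo(\calf^\X)$ and the assignment $P \mapsto Z(P)$ for $P \in \X$, one attempts to build a weak $\X$-linking system by choosing, for each pair $P,Q \in \X$ and each $[\varphi] \in Q{\backslash}\homf(P,Q)$, a morphism $\til\varphi \in \Mor_{\call^\X}(P,Q)$ lifting $[\varphi]$, together with an associative composition law. Picking set-theoretic sections of the maps $\Mor_{\call^\X}(P,Q) \twoheadrightarrow Q{\backslash}\homf(P,Q)$, the associativity defect is a normalized $3$-cocycle on $\calo(\calf^\X)\op$ with coefficients $\calz_\calf|_\X$, where functoriality sends $[\varphi]\:P\to Q$ to the map $Z(Q)\to Z(P)$ induced by $\varphi^{-1}$ (as forced by condition (C) of Definition \ref{d:link}); changes of sections, or of the composition law over a given section, modify this cocycle by coboundaries or by $2$-cocycles respectively. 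Nothing in the arguments of \cite[\S\,3]{BLO2} uses $\X = \calf^c$ beyond the closure of $\X$ under $\calf$-conjugacy and overgroups, which is exactly what guarantees that compositions and subconjugations remain inside $\X$.

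Finally I would identify $\higherlim{*}{\calo(\calf^\X)}{\calz_\calf|_\X}$ with $\higherlim{*}{\calo(\calf^c)}{\calz_\calf^\X}$. Because $\X$ is closed under $\calf$-conjugacy and overgroups, any $\calf$-morphism whose source lies in $\X$ has its target in $\X$ as well (its image is $\calf$-conjugate to its source, hence in $\X$, and the target is an overgroup of the image); thus $\calo(\calf^\X)$ is a cosieve in $\calo(\calf^c)$. The normalized bar complex computing $\higherlim{*}{\calo(\calf^c)}{\calz_\calf^\X}$ therefore receives nontrivial contributions only from chains of composable morphisms whose objects all lie in $\X$, and coincides with the bar complex of $\calo(\calf^\X)\op$ with coefficients $\calz_\calf|_\X$. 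The main technical hurdle is really the verification of the previous paragraph — checking that the specific cocycle construction in \cite[\S\,3]{BLO2} adapts to an arbitrary $\X$ and lands in the subfunctor $\calz_\calf^\X$ — after which (a) and (b) are the standard $H^3$-obstruction and $H^2$-torsor statements for the relevant cocycles, translated back via Proposition \ref{uniq.ext.}.
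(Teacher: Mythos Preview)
Your proposal is correct and follows the same approach as the paper, which simply states that the result ``follows with exactly the same proof as that of \cite[Proposition 3.1]{BLO2} (the case where $\X=\calf^c$).'' Your elaboration --- reducing to weak $\X$-linking systems via Proposition \ref{uniq.ext.}, noting that the cocycle construction in \cite[\S\,3]{BLO2} goes through unchanged for any $\X$ closed under $\calf$-conjugacy and overgroups, and identifying the higher limits via the cosieve argument --- fills in exactly the details the paper leaves implicit.
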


\begin{proof} This follows with exactly the same proof as that of 
\cite[Proposition 3.1]{BLO2} (the case where $\X=\calf^c$).
\end{proof}

Now fix a finite group $\Gamma$ and a $\Z_{(p)}\Gamma$-module $M$. Choose 
$T\in\sylp\Gamma$, and let 
	\beqq F_M\:\calo_T(\Gamma)\op\Right4{}\Ab 
	\qquad\textup{be defined by}\qquad
	F_M(P) = \begin{cases} 
	M & \textup{if $P=1$} \\
	0 & \textup{if $P\ne1$.} 
	\end{cases} \label{e:F_M} \eeqq
Here, $\Aut_{\calo_T(\Gamma)}(1)\cong\Gamma$ has the given action on 
$F_M(1)=M$. For each $i\ge0$, set
	\[ \Lambda^i(\Gamma;M) = \higherlim{i}{\calo_T(\Gamma)}{F_M}. \]


\begin{Thm} \label{Lambda3}
Fix a finite group $\Gamma$, and an $\F_p\Gamma$-module $M$ on which 
$\Gamma$ acts faithfully. Set $G=M\rtimes\Gamma$, choose $T\in\sylp\Gamma$, 
and set $S=M\rtimes T\in\sylp{G}$. Set 
	\[ \X = \{P\le S\,|\,P\ge M\} \qquad\textup{and}\qquad
	\Y = \{P\le S\,|\,P>M\}=\X\sminus\{M\}\,. \]
Then 
\begin{enuma} 

\item there is a unique isomorphism class of $\X$-linking systems 
associated to $\calf_S(G)$; and 

\item the set of isomorphism classes of $\Y$-linking systems associated to 
$\calf_S(G)$ is in bijective correspondence with $\Lambda^3(\Gamma;M)$.

\end{enuma}
Thus if $\Lambda^3(\Gamma;M)\ne0$, then there is (up to isomorphism) more 
than one $\Y$-linking system associated to $\calf$, only one of which can 
be extended to an $\X$-linking system.
\end{Thm}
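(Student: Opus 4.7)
The plan is to invoke Proposition \ref{obstr} and reduce to computations of higher limits over $\calo(\calf^c)$.

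First I would verify the hypotheses. Since $\Gamma$ acts faithfully on $M$, $C_G(M) = M$, so any $P \ge M$ satisfies $C_S(P) \le C_S(M) = M \le P$; hence $\X \sset \calf^c$. Since $M \nsg G$, the $\calf$-conjugacy class of $M$ is $\{M\}$, so both $\X$ and $\Y = \X\sminus\{M\}$ are closed under $\calf$-conjugacy (closure under overgroups is immediate). Define a functor $K\:\calo(\calf^c)\op\too\Ab$ by $K(M)=M$, on which $\Aut_{\calo(\calf^c)}(M) = \Aut_\calf(M)/\Inn(M) = \Gamma$ (using $\Inn(M)=1$, as $M$ is abelian) acts naturally, and $K(P)=0$ for $P \ne M$. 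The short exact sequence
	\[ 0 \too K \too \calz_\calf^\X \too \calz_\calf^\Y \too 0 \]
of functors on $\calo(\calf^c)\op$ induces a long exact sequence of higher limits, which will organize the rest of the proof.

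The main computation has two parts. First,
	\[ \higherlim{i}{\calo(\calf^c)}{K} \iso \Lambda^i(\Gamma;M) \quad\text{for all } i \ge 0. \]
This is an instance of the principle that, for a functor on $\calo(\calf^c)$ supported on the single $\calf$-conjugacy class of a subgroup $P_0$ with value $V$, the higher limits are isomorphic to $\Lambda^*(\Out_\calf(P_0); V)$. Here $P_0 = M$, $V = M$, and $\Out_\calf(M) = \Gamma$. I would verify it by observing that any $\calf$-morphism out of an overgroup of $M$ has target containing $M$, so the full subcategory of $\calo(\calf^c)$ on $\X$ contains all $\calo(\calf^c)$-morphisms out of $M$; this lets one reduce the higher-limit computation to that subcategory, which is equivalent to $\calo_T(\Gamma)$ via $P \leftrightarrow P/M$. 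Second,
	\[ \higherlim{i}{\calo(\calf^c)}{\calz_\calf^\X} = 0 \quad\text{for } i = 2, 3, \]
which gives (a): uniqueness comes from degree $2$ and existence from degree $3$ (alternatively, restricting $\call_S^c(G)$ to $\X$ already exhibits an $\X$-linking system). Under the equivalence $\calo(\calf^c)|_\X \iso \calo_T(\Gamma)$, the restriction of $\calz_\calf^\X$ corresponds to $P_T \mapsto C_M(P_T)$, and I would deduce the vanishing in the relevant degrees by a structural analysis of this functor over $\calo_T(\Gamma)$.

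Combining these inputs with the long exact sequence yields the fragment
	\[ 0 = \higherlim{2}{\calo(\calf^c)}{\calz_\calf^\X} \too \higherlim{2}{\calo(\calf^c)}{\calz_\calf^\Y} \too \Lambda^3(\Gamma;M) \too \higherlim{3}{\calo(\calf^c)}{\calz_\calf^\X} = 0, \]
so $\higherlim{2}{\calo(\calf^c)}{\calz_\calf^\Y} \iso \Lambda^3(\Gamma;M)$. Existence of a $\Y$-linking system is automatic by restricting an $\X$-linking system, so Proposition \ref{obstr}(b) gives (b). For the final sentence of the theorem, the restriction map $\linkset_\calf^\X \too \linkset_\calf^\Y$ sends the unique class provided by (a) to a single iso class in $\linkset_\calf^\Y$, which is therefore the unique iso class of $\Y$-linking systems that extends to an $\X$-linking system. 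I expect the main technical obstacle to be the two computational inputs---the identification of $\higherlim{*}{\calo(\calf^c)}{K}$ with $\Lambda^*(\Gamma;M)$, and the vanishing of $\higherlim{2,3}{\calo(\calf^c)}{\calz_\calf^\X}$---both of which require a careful comparison of $\calo(\calf^c)$ with $\calo_T(\Gamma)$ and a structural analysis of the resulting higher limits.
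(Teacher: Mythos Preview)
Your proposal is correct and follows essentially the same route as the paper: the short exact sequence $0\to K\to\calz_\calf^\X\to\calz_\calf^\Y\to0$, the identification $\higherlim{*}{\calo(\calf^c)}{K}\cong\Lambda^*(\Gamma;M)$, the vanishing of $\higherlim{*}{\calo(\calf^c)}{\calz_\calf^\X}$ in positive degrees, and then Proposition~\ref{obstr}. For your two ``technical obstacles'' the paper simply cites \cite[Proposition~3.2]{BLO2} and \cite[Lemma~1.6(b)]{O-Ch} respectively (the latter in fact gives vanishing for all $i>0$, not just $i=2,3$), rather than the direct reduction to $\calo_T(\Gamma)$ that you sketch.
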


\begin{proof} By \cite[Lemma 1.6(b)]{O-Ch}, 
	\beqq \higherlim{i}{\calo(\calf^c)}{\calz_\calf^\X}=0 \qquad 
	\textup{for all $i>0$.} \label{e:Z_X} \eeqq
In particular, by Proposition \ref{obstr}, there is up to isomorphism a 
unique $\X$-linking system $\call^\X$ associated to $\calf$. This also 
shows that there is at least one $\Y$-linking system: the full subcategory 
of $\call^\X$ with object set $\Y$.

Let $\calz_0\subseteq\calz_\calf^\X$ be the subfunctor 
	\[ \calz_0(P) = \begin{cases} 
	0 & \textup{if $P\in\Y$} \\
	\calz_\calf^\X(P)=M & \textup{if $P=M$.}
	\end{cases} \]
Thus $\calz_\calf^\X/\calz_0\cong\calz_\calf^\Y$. By \cite[Proposition 
3.2]{BLO2}, for each $i\ge0$, 
	\[ \higherlim{i}{\calo(\calf^c)}{\calz_0} \cong 
	\Lambda^i(\outf(M);\calz_0(M)) \cong \Lambda^i(\Gamma;M). \]
So from \eqref{e:Z_X} and the long exact sequence of higher limits for the 
extension 
	\[ 0 \Right2{} \calz_0 \Right4{} \calz_\calf^\X \Right4{} 
	\calz_\calf^\Y \Right2{} 0 \]
(see, e.g., \cite[Proposition 5.1(i)]{JMO} or \cite[Lemma 1.7]{O-Ch}), we get 
that 
	\beqq \higherlim{i}{\calo(\calf^c)}{\calz_\calf^\Y}\cong 
	\Lambda^{i+1}(\Gamma;M) \qquad \textup{for all 
	$i>0$.} \label{e:Z_Y} \eeqq
By Proposition \ref{obstr} again, the $\Y$-linking 
systems associated to $\calf$ are in bijective correspondence with 
$\Lambda^3(\Gamma;M)$. 
\end{proof}

\bigskip

\section{Some explicit examples}

We now give some concrete examples of pairs $(\Gamma,M)$ such that 
$\Lambda^3(\Gamma;M)\ne0$, using three independent methods. 

In general, if $M$ is an $\F_p\Gamma$-module such that 
$\Lambda^k(\Gamma;M)\ne0$ for some $k\ge1$, then $\dim_{\F_p}(M)\ge p^k$ 
(see \cite[Proposition 6.3]{BLO1} or \cite[Lemma III.5.27]{AKO}). This 
helps to explain why the examples given below (for $k=3$) are fairly large: 
there are no examples when $\dim(M)<p^3$. In fact, in the examples of 
\ref{ex2} and \ref{ex3}, $M$ has dimension exactly $p^3$.

\begin{Ex} \label{ex1}
Let $p$ be any prime, and let $\Gamma$ be a finite group of Lie type of Lie 
rank $3$ in defining characteristic $p$. Let $\St(\Gamma)$ be the Steinberg 
module (over $\F_p$) for $\Gamma$. Then 
$\Lambda^3(\Gamma;\St(\Gamma))\cong\F_p$. 
\end{Ex}

\begin{proof} Fix $U\in\sylp\Gamma$. By a theorem of Grodal \cite[Theorem 
4.1]{Grodal}, and since $\Gamma$ has Lie rank $3$,
	\[ \Lambda^3(\Gamma;\St(\Gamma)) = 
	\higherlim3{\calo_U(\Gamma)}{F_{\St(\Gamma)}} 
	\cong \Hom_\Gamma(\St(\Gamma),\St(\Gamma)) \cong \F_p, \]
where the last isomorphism holds since $\St(\Gamma)$ is absolutely 
irreducible (see \cite[Proposition 6.2.2]{Carter}). 
\end{proof}

For example, in Example \ref{ex1}, we can take $\Gamma=\SL_4(p)$, and 
let $M$ be its $p^6$-dimensional Steinberg module \cite[Corollary 
6.4.3]{Carter}.

We next list some of the elementary properties of the $\Lambda^*(\Gamma;M)$ 
that will be used in the other two examples.

\begin{Prop} \label{Lambda-props}
Fix a finite group $\Gamma$ and a $\Z_{(p)}\Gamma$-module $M$.
\begin{enuma} 

\item $\Lambda^0(\Gamma;M)=0$ if $p\bmid|\Gamma|$, and 
$\Lambda^0(\Gamma;M)\cong C_M(\Gamma)$ otherwise.

\item If $p\bmid|C_\Gamma(M)|$, or if $O_p(\Gamma)\ne1$, then 
$\Lambda^i(\Gamma;M)=0$ for all $i\ge0$.

\item \textup{(Künneth formula)} If $\Gamma_1$ and $\Gamma_2$ are two 
finite groups, and $M_i$ is a finitely generated $\F_p\Gamma_i$-module for 
$i=1,2$, then for each $k\ge0$,
	\[ \Lambda^k(\Gamma_1\times\Gamma_2;M_1\otimes_{\F_p} M_2) \cong 
	\bigoplus_{j=0}^k \Lambda^j(\Gamma_1;M_1) \otimes_{\F_p} 
	\Lambda^{k-j}(\Gamma_2;M_2). \]

\item If $T\in\sylp\Gamma$ has order $p$, then $\Lambda^1(\Gamma;M)\cong 
C_M({N_\Gamma(T)})\big/C_M(\Gamma)$, and $\Lambda^i(\Gamma;M)=0$ for $i\ne1$.

\end{enuma}
\end{Prop}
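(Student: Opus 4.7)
The plan is to treat the four parts independently, using direct computation where possible and citing the relevant results from \cite{JMO} for the parts that require cohomological machinery. Part (b) will be the main obstacle; the others are either formal or a small explicit calculation.

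For part (a), I would compute the inverse limit directly. Since $F_M$ vanishes on every $P\ne1$, a compatible family $(x_P)_P$ is determined by $x_1\in M$, and the condition coming from $\Aut_{\calo_T(\Gamma)}(1)\iso\Gamma$ forces $x_1\in C_M(\Gamma)$. If $p\nmid|\Gamma|$ then $T=1$, so there are no further compatibility constraints and $\Lambda^0(\Gamma;M)=C_M(\Gamma)$. If $p\bmid|\Gamma|$, pick any non-trivial $P\le T$; the inclusion $1\le P$ gives a morphism $1\to P$ in $\calo_T(\Gamma)$ whose associated map $F_M(P)=0\to F_M(1)=M$ forces $x_1=0$.

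For part (b), I would appeal to the corresponding statements in \cite{JMO}. The case $O_p(\Gamma)\ne1$ is the more basic one: a non-trivial normal $p$-subgroup $N\le T$ provides a contracting cochain homotopy on the complex computing the higher limits of any functor supported at the trivial subgroup of $T$. The case $p\bmid|C_\Gamma(M)|$ is reduced to the first by a restriction-transfer argument: after conjugating, one picks $K\le C_\Gamma(M)\cap T$ of order $p$ and compares $\Lambda^*(\Gamma;M)$ with the analogous groups over an appropriate subgroup of $\Gamma$ in which $K$ is a non-trivial normal $p$-subgroup. This case is the main obstacle in the proposition, in that it is the only step that genuinely requires cohomological input.

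Part (c) follows from the canonical isomorphism of categories $\calo_{T_1\times T_2}(\Gamma_1\times\Gamma_2)\iso\calo_{T_1}(\Gamma_1)\times\calo_{T_2}(\Gamma_2)$, the identification $F_{M_1\otimes M_2}\iso F_{M_1}\boxtimes F_{M_2}$ of the functors as external tensor products, and the K\"unneth theorem for higher limits over a product of small categories; since the coefficients lie in $\F_p$-vector spaces there is no derived torsion term. Part (d) is a direct cochain-complex computation: when $|T|=p$ the orbit category has only two isomorphism classes of objects, $1$ and $T$, so the normalized complex computing $\higherlim{*}{\calo_T(\Gamma)}{F_M}$ is short. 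Careful bookkeeping over the chains $1\to\cdots\to1\to T\to\cdots\to T$ identifies $\Lambda^1(\Gamma;M)$ with $C_M(N_\Gamma(T))/C_M(\Gamma)$ and shows that $\Lambda^i(\Gamma;M)=0$ for $i\ne1$.
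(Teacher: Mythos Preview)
Your proposal is correct in outline, but note that the paper's own ``proof'' is simply a one-line citation to \cite[Propositions 6.1(i,ii,v) \& 6.2(i)]{JMO}; it offers no argument at all. Your sketches for (a), (c), and (d) are sound elaborations of what lies behind those citations: the direct inverse-limit calculation in (a), the product decomposition and K\"unneth argument in (c), and the two-object bar-complex computation in (d) are all standard and accurate.

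The one place where your sketch is shakier than the result you cite is the second half of (b), the case $p\bmid|C_\Gamma(M)|$. The ``restriction--transfer to a subgroup in which $K$ is normal'' outline does not quite work as stated: for a transfer argument to force injectivity of $\Lambda^*(\Gamma;M)\to\Lambda^*(H;M)$ one needs $[\Gamma:H]$ prime to $p$, and $N_\Gamma(K)$ need not have that property. The actual argument in \cite{JMO} is different (it exploits the fact that a $p$-element acting trivially on $M$ gives an explicit null-homotopy on the normalized cochain complex, via averaging over the coset action). Since you explicitly defer to \cite{JMO} for this part anyway, this is not a gap in your proof, only an inaccuracy in your informal description of what that reference contains.
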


\begin{proof} See \cite[Propositions 6.1(i,ii,v) \& 6.2(i)]{JMO}, 
respectively. 
\end{proof}

As one easy application of Proposition \ref{Lambda-props}(d), if 
$V\cong(\F_p)^p$ is the natural module for $\Sigma_{p+1}$ over $\F_p$ 
(i.e., the $(p+1)$-dimensional permutation module modulo the diagonal), then 
	\beqq \Lambda^i(\Sigma_{p+1};V)\cong \begin{cases} 
	\F_p & \textup{if $i=1$} \\
	0 & \textup{if $i\ne1$.}
	\end{cases} \label{e:Lambda1-ex} \eeqq
This will be used in each of the next two examples below.


\begin{Ex} \label{ex2}
For each prime $p$, 
	\[ \Lambda^3(\Sigma_{p+1}\times\Sigma_{p+1}\times\Sigma_{p+1}
	\,;\, V\otimes V\otimes V) \cong\F_p, \]
where $V\cong(\F_p)^p$ is the natural module for $\Sigma_{p+1}$.
\end{Ex}

\begin{proof} This follows from \eqref{e:Lambda1-ex} and the Künneth 
formula (Proposition \ref{Lambda-props}(c)). 
\end{proof}

The last example is based on taking wreath products with $C_p$, using the 
following formula.

\begin{Lem} \label{wreath}
Let $\Gamma$ be a finite group such that $p\bmid|\Gamma|$. Then for each 
$\F_p\Gamma$-module $M$ and each $i\ge1$,
	\[ \Lambda^i(\Gamma\wr C_p;M^p) \cong \Lambda^{i-1}(\Gamma;M)\,. 
	\]
\end{Lem}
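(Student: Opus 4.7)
The plan is to compute $\Lambda^i(\widetilde\Gamma;M^p) = H^i(\calo_{\widetilde T}(\widetilde\Gamma); F_{M^p})$ by partitioning the subgroups of $\widetilde T = T^p \rtimes C_p$ according to the projection $\widetilde\Gamma \onto1 \widetilde\Gamma/\Gamma^p \iso C_p$: call type (a) the subgroups contained in $T^p$, and type (b) the subgroups with nontrivial image in $C_p$. Since $F_{M^p}$ is supported at the trivial subgroup (an object of type (a)), this partition induces a short exact sequence of subfunctors of $F_{M^p}$ on $\calo_{\widetilde T}(\widetilde\Gamma)\op$, and the associated long exact sequence of higher limits separates the computation into type-(a) and type-(b) contributions.

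For type (a), I would exploit the additive decomposition $M^p = \bigoplus_{i=1}^p M_i$ as a $\Gamma^p$-module, where the $j$-th factor $\Gamma_j \le \Gamma^p$ acts trivially on $M_i$ whenever $j \ne i$. Thus $\Gamma_{\ne i} := \prod_{j \ne i} \Gamma_j \le C_{\Gamma^p}(M_i)$ has order $|\Gamma|^{p-1}$, divisible by $p$, so Proposition \ref{Lambda-props}(b) forces $\Lambda^j(\Gamma^p; M_i) = 0$ for all $i,j$; additivity gives $\Lambda^*(\Gamma^p; M^p) = 0$. A Grothendieck spectral sequence for the passage from $\calo_{T^p}(\Gamma^p)$ up to the type-(a) subcategory of $\calo_{\widetilde T}(\widetilde\Gamma)$ (the $C_p$-quotient merely permutes $E_2$-columns) then yields vanishing of the type-(a) contribution to $H^*(\calo_{\widetilde T}(\widetilde\Gamma); F_{M^p})$.

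For type (b), each such $Q$ contains some lift $(\vec g, \sigma) \in Q$ of a generator of $C_p$, and $T^p$-conjugation normalizes this lift to $((g,1,\dots,1),\sigma)$ with $g = g_1 g_2 \cdots g_p \in T$; in particular, $Q$ is $\widetilde\Gamma$-conjugate to a subgroup built from $\Delta(P)$ and $\langle\sigma\rangle$, where $\Delta \: \Gamma \hookrightarrow \Gamma^p$ is the diagonal and $P \le T$. I would set up a functor $\calo_T(\Gamma) \too (\text{type-(b) subcategory})$ via $P \mapsto \Delta(P)\langle\sigma\rangle$, verify it induces an equivalence after orbit-category quotients, and show that $F_{M^p}$ restricted to type (b) transports along this equivalence to $F_M$ on $\calo_T(\Gamma)$. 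The degree shift arises because the concentration point $\{1\}$ of $F_{M^p}$ sits one step below every type-(b) subgroup in $\widetilde\Gamma$, whereas $\{1\}$ is itself an object of $\calo_T(\Gamma)$; consequently the connecting homomorphism of the long exact sequence gives $\Lambda^i(\widetilde\Gamma; M^p) \cong \Lambda^{i-1}(\Gamma; M)$ for $i \ge 1$.

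The main obstacle is the morphism-level verification in the type-(b) identification: morphisms in $\calo_{\widetilde T}(\widetilde\Gamma)$ between type-(b) objects are $\widetilde\Gamma$-equivariant and mix the $\Gamma^p$-action with the $C_p$-twist nontrivially, and showing that after the standard-form normalization these reduce exactly to $\Gamma$-equivariant morphisms in $\calo_T(\Gamma)$ requires a direct but delicate computation of normalizers and centralizers of subgroups like $\Delta(P)\langle\sigma\rangle$ inside $\widetilde\Gamma$.
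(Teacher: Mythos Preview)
Your proposal has a genuine gap at its starting point: the ``short exact sequence of subfunctors of $F_{M^p}$'' you invoke does not exist. The functor $F_{M^p}$ is supported at the single object $\{1\}$, which is of type (a), so any decomposition of $F_{M^p}$ according to the type-(a)/type-(b) partition is trivial---one term is all of $F_{M^p}$ and the other is zero. There is therefore no nontrivial long exact sequence here, and in particular no connecting homomorphism to produce the degree shift. Your computation $\Lambda^*(\Gamma^p;M^p)=0$ is correct, but absent a genuine exact sequence it tells you nothing about $\Lambda^*(\widetilde\Gamma;M^p)$. The type-(b) step is also not right as stated: type-(b) subgroups of $\widetilde T$ are not in general $\widetilde\Gamma$-conjugate to groups of the form $\Delta(P)\gen{\sigma}$ (for instance $R^p\rtimes\gen{\sigma}$ with $1\ne R\le T$ has $|R^p|=|R|^p>|R|$, so its intersection with $T^p$ cannot be diagonal), so $P\mapsto\Delta(P)\gen{\sigma}$ cannot give an equivalence with the type-(b) orbit subcategory.

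The missing idea is an \emph{auxiliary} functor on $\calo_S(\widetilde\Gamma)\op$ that genuinely contains $F_{M^p}$ as a proper subfunctor and is itself acyclic. The paper takes the norm functor $\N(P)=\bigl\{\sum_{g\in P}g\xi\bmid\xi\in M^p\bigr\}$; one has $\N(1)=M^p$, and $\higherlim{*}{\calo_S(\widetilde\Gamma)}{\N}=0$ by a standard acyclicity result. The long exact sequence for $0\to F_{M^p}\to\N\to\N_0\to0$ then gives the shift $\Lambda^i(\widetilde\Gamma;M^p)\cong\higherlim{i-1}{\calo_S(\widetilde\Gamma)}{\N_0}$, and it is only \emph{after} this shift that one carries out a subgroup-by-subgroup analysis of $\N_0$ (using Proposition~\ref{Lambda-props}(b) much as you suggest) to see that the sole contributing conjugacy class is that of $\gen{\sigma}$, where $N_{\widetilde\Gamma}(\gen{\sigma})/\gen{\sigma}\cong\Gamma$ acts on $\N_0(\gen{\sigma})\cong M$.
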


\begin{proof} Set $G=\Gamma\wr C_p$ for short. Let $G_0\nsg\Gamma$ and 
$x\in G\sminus G_0$ be such that $G_0=\Gamma^p$ (a fixed identification), 
$x^p=1$, and $\9x(g_1,\dots,g_p)=(g_2,\dots,g_p,g_1)$. For 
$g=(g_1,\dots,g_p)\in G_0$, $(gx)^p=1$ if and only if $g_1g_2\cdots g_p=1$, 
in which case $gx$ is $G$-conjugate to $x$. 

Fix $T\in\sylp\Gamma$, and set $S=T^p\gen{x}\in\sylp{G}$. Define 
$\N\:\calo_S(G)\op\Right2{}\F_p\mod$ by setting 
	\[ \N(P) = \bigl\{ \textstyle\sum_{g\in P}g\xi \,\big|\, \xi\in M^p \bigr\}. \] 
If $\varphi\in\Mor_{\calo_S(G)}(P,Q)=\map_G(G/P,G/Q)$ has the form 
$\varphi(gP)=gaQ$ for some $a\in G$ such that $P\le \9aQ$, then 
$\N(\varphi)\:\N(Q)\too\N(P)$ is defined by setting $\N(\varphi)(\xi)=a\xi$. 
By \cite[Proposition 5.2]{JMO} (recall that $\5H^0(P;M)\cong C_M(P)/\N(P)$), or 
(more explicitly) by \cite[Proposition 1.7]{O-limz},
	\beqq \higherlim{i}{\calo_S(G)}{\N} \cong \begin{cases} 
	\N(G) \defeq
	\bigl\{ \sum_{g\in G}g\xi \,\big|\, \xi\in M^p \bigr\} = 0 & 
	\textup{if $i=0$} \\
	0 & \textup{if $i\ge1$.}
	\end{cases} \label{e:1a} \eeqq
(Recall that $p\bmid|\Gamma|$ and $pM=0$ when checking that $\N(G)=0$.)

Let $F_{M^p}$ be as in \eqref{e:F_M}, regarded as a subfunctor of $\N$, and 
set $\N_0=\N/F_{M^p}$. Thus $\N_0(P)=\N(P)$ for $1\ne P\le S$ and 
$\N_0(1)=0$. By \eqref{e:1a} and the long exact sequence for the extension 
$0\to F_{M^p}\too\N\too\N_0\to0$ of functors, for each $i>0$,
	\beqq \Lambda^i(G;M^p) = \higherlim{i}{\calo_S(G)}{F_{M^p}} 
	\cong \higherlim{i-1}{\calo_S(G)}{\N_0}. \label{e:2} \eeqq

Now fix $1\ne P\le S$, and set $P_0=P\cap G_0$. Assume that 
$\Lambda^*(N_G(P)/P;\N_0(P))\ne0$. For $1\le i\le p$, let $P_i$ be 
the image of $P_0$ under projection to the $i$-th factor of 
$G_0=\Gamma^p$, and set $\5P=P_1\times\cdots\times P_p\in G_0$. Each element in 
$N_G(P)$ normalizes $P_0$ and hence normalizes $\5P$, so $P\5P\le G$, and 
$N_{P\5P}(P)\nsg N_G(P)$. If $\5P>P_0$, then $P\5P>P$, and $1\ne 
N_{P\5P}(P)/P\le O_p(N_G(P)/P)$. This contradicts Proposition 
\ref{Lambda-props}(b), and thus $P_0=\5P=P_1\times\cdots\times P_p$. 

If two or more of the $P_i$ are nontrivial, then $\N(P)=0$. Otherwise, we 
can assume (up to conjugacy in $G$) that $P_i=1$ for $2\le i\le p$. If 
$P_1\ne1$, then $P=P_0$, $\N(P)\le M\times0\times\cdots0$, and hence 
$1\times\Gamma^{p-1}\le N_G(P)$ acts trivially on $\N(P)$. Since 
$p\bmid|\Gamma|$, this again contradicts Proposition \ref{Lambda-props}(b). 
Hence $P_0=1$, $P\ne1$, and $P$ is $G$-conjugate to $\gen{x}$ by the earlier 
remarks. 

Thus for $P\le S$, $\Lambda^*(N_G(P)/P;\N_0(P))=0$ except when $P$ is 
$G$-conjugate to $\gen{x}$. So by \eqref{e:2} and \cite[Corollary 
III.5.21(b)]{AKO}, 
	\beq \Lambda^i(G;M^p) \cong \higherlim{i-1}{\calo_S(G)}{\N_0}
	\cong \Lambda^{i-1}(N_G(\gen{x})/\gen{x};\N_0(\gen{x})) \cong
	\Lambda^{i-1}(\Gamma;M). \qedhere \eeq
\end{proof}

The third example now follows immediately from \eqref{e:Lambda1-ex} and 
Lemma \ref{wreath}.

\begin{Ex} \label{ex3}
For each prime $p$,
	\[ \Lambda^3(\Sigma_{p+1}\wr C_p\wr C_p \,\,;\, V^{p^2})\cong \F_p, \]
where $V\cong(\F_p)^p$ is the natural module for $\Sigma_{p+1}$. \qed
\end{Ex}

\bigskip\bigskip

\end{document}  

